 \newtheorem{theorem}{Theorem}[section]
 \newtheorem{lemma}[theorem]{Lemma}
 \theoremstyle{definition}
 \theoremstyle{remark}
 \newtheorem{example}{Example}
 \numberwithin{equation}{section}
\begin{document}

%
%
%
%
%
%
%
%
%

\title[]
 {Conformal Ricci solitons on generalized $(\kappa,\mu)$-space forms}


\author[Lone]{Mehraj Ahmad Lone}

\address{%
	Department of Mathematics,\\National Institute of Technology Srinagar, \\
	190006, Kashmir, India.}

\email{mehrajlone@nitsri.net}

\author[Wani]{Towseef Ali Wani}

\address{%
	Department of Mathematics,\\National Institute of Technology Srinagar, \\
	190006, Kashmir, India.}

\email{towseef\_02phd19@nitsri.net}


\subjclass{53C05, 53C40}

\keywords{Conformal Ricci soliton, Conformal gradient Ricci soliton, generalized $(\kappa,\mu)$-space form,}


\begin{abstract}
In this paper, we study conformal Ricci solitons and conformal gradient Ricci solitons on generalized $(\kappa,\mu)$-space forms. The conditions for the solitons to be shrinking, steady and expanding are derived in terms of conformal pressure $p$. We show under what conditions a Ricci semi-symmetric generalized $(\kappa,\mu)$- space form equiped with a conformal Ricci soliton forms an Einstein manifold.
\end{abstract}

\maketitle

\section{\protect \bigskip Introduction} 	
The concept of Ricci flow was introduced by R. Hamilton \cite{Hamilton} in 1982. The Ricci flow is an evolution equation for metric on a Riemannian manifold given by
\begin{align*}
	\frac{\partial g}{\partial t}=-2 S
\end{align*}
where $g$ is the Riemannian metric and $S$ denotes the Ricci tensor.\\
A self-similar solution of the Ricci flow \cite{Hamilton,Topping}, which moves only by one parameter family of diffeomorphism and scaling is called a Ricci soliton \cite{Hamilton_S}. The Ricci soliton is given by
\begin{align*}
	\mathcal{L}_Vg+2S=2\lambda g
\end{align*}
where $\mathcal{L}_V$ is the Lie derivative, $S$ is the Ricci tensor, $g$ is the Riemannian metric, $V$ is the vector field and $\lambda$ is a scalar. The Ricci soliton is denoted by $(g,V,\lambda)$ and is said to be shrinking, steady and expanding according to whether $\lambda$ is positive, zero and negative respectively.\\
The concept of conformal Ricci flow was introduced by Fischer \cite{Fischer} as a variation of classical Ricci flow equation that modifies the volume constraint to a scalar curvature constraint. The confomal Ricci flow on a smooth, closed, connected, oriented $n$-manifold is defined by the equation \cite{Fischer}
\begin{align*}
	&\frac{\partial g}{\partial t}+2(S+\frac{g}{n})=-pg\\&
	\text{and}\hspace{1cm}r=-1
\end{align*}
where $p$ is a non-dynamical scalar field which is time dependent, $r$ is the scalar curvature of the manifold, and $n$ is the dimension of $M$.\\
In 2015, Basu et. al. \cite{Basu} intoduced the notion of conformal Ricci soliton equation on Kenmotsu manifold $M^{2n+1}$ as
\begin{align}\label{1.1}
	\mathcal{L}_Vg+2S=[2\lambda-(p+\frac{2}{2n+1})]g
	\end{align}
	where $\lambda$ is constant. The equation is a generalization of the Ricci soliton and satisfies the conformal Ricci flow equation.The conformal Ricci flow equations are analogous
	to the Navier-Stokes equations of fluid mechanics and because of this analogy, the time-dependent scalar field $p$ is called a conformal pressure and, as for the real physical pressure in fluid mechanics that serves to maintain the incompressibility of the fluid, the
	conformal pressure serves as a Lagrange multiplier to conformally deform the metric
	flow so as to maintain the scalar curvature constraint.\\
	A conformal Ricci soliton is called a conformal gradient Ricci soliton if the potential vector field $V$ is gradient of some smooth function $f$ i.e. $V=grad(f)=\nabla f$ and satisfies
	\begin{align}
		\nabla \nabla f+S=[2\lambda-(p+\frac{2}{2n+1})]g
	\end{align}
	where $\nabla$ is Riemannian connection on the Riemannian manifold.\\
	Conformal Ricci solitons were studied by Ganguly et.al. within the framework of almost co-Kahler manifolds \cite{13} and $(LCS)_n$-manifolds \cite{14}. The authors generalized conformal Ricci solitons and obtained interesting rgesults in \cite{11}. Dey et.al. \cite{dey} studied conformal Ricci solitons on almost Kenmotsu manifolds.  Siddiqui \cite{Danish} studied conformal Ricci solitons of Lagrangian submanifolds in Kahler manifolds. Ganguly et. al. \cite{akram} investigated conformal Ricci solitons and quasi-Yamable soliton on generalized Sasakian space form. Motivated by these studies, we investigate conformal Ricci soliton and conformal gradient Ricci soliton in generalized $(\kappa,\mu)$- space forms.
\section{Preliminaries}
Let $(M,g)$ be a Riemannian manifold of dimension $(2n+1)$. $(M,g)$ is called almost contact manifold \cite{Blair}  if we can define an endomorpism $\phi$ on its tangent bundle $TM$, a vector field $\xi$ and a 1-form $\eta$ satisfying
\begin{eqnarray}
\phi^2 = -Id + \eta \otimes \xi, \quad  \phi(\xi)=0,\quad \eta(\phi)=0,
\end{eqnarray}
\begin{eqnarray}
g(X,\xi)= \eta(X),
\end{eqnarray}
\begin{eqnarray}\label{2.3}
\eta(\xi)=1,
\end{eqnarray}
for any vector fields $X, Y$ on $M$.
It is called a contact metric manifold if 
\begin{eqnarray*}
g(\phi X, \phi Y) = g(X,Y)-\eta(X)\eta(Y).
\end{eqnarray*}
Making use of above equations, it is easy to prove that for an almost contact metric manifold $(M,g)$,
\begin{eqnarray}
g(\phi E,F)= -g(E,\phi  F).
\end{eqnarray}
An almost contact metric manifold is said to be a contact manifold if its second fundamental 2-form $\Phi$, defined by $\Phi(X,Y)= g(X,\phi Y)$, satisfies
\begin{eqnarray*}
	d\eta= \Phi.
\end{eqnarray*}
On a contact metric manifold $M(\phi,\xi,\eta,g)$ the tensor $h$ defined by $\mathcal{L}_{\xi}\phi$ is symmetric and satisfies the following relations \cite{Blair}
\begin{align*}
\nabla_X\xi=-\phi X-\phi hX, h\xi=0,h\phi=-\phi h, tr(h)=0,\eta\circ h=0
\end{align*}
A contact manifold is called a $(\kappa,\mu)$-metric manifold if the characteristic vector field $\xi$ belongs to the $(\kappa,\mu)$-distribution i.e.\cite{BOeckx}
\begin{align*}
	R(X,Y)\xi=\kappa\{\eta(Y)X-\eta(X)Y\}-\mu\{\eta(Y)hX-\eta(X)hY\}
\end{align*}
where $X$ and $Y$ are vector fields on $M$ and $2h=\mathcal{L}_{\xi}\phi$. If $\kappa,\mu $ are some smooth functions then the manifold is called generalized $(\kappa,\mu)$-space. A $(\kappa,\mu)$-space of dimension greater than three with constant $\phi$-sectional curvature is called $(\kappa,\mu)$-space form and its curvature tensor is given by \cite{Kouf}
\begin{align*}
R(X,Y)Z=&\frac{c+3}{4}R_1(X,Y)Z+\frac{c-1}{4}R_2(X,Y)Z+(\frac{c+3}{4}-\kappa)R_3(X,Y)Z\\&+R_4(X,Y)Z+\frac{1}{2}R_5(X,Y)Z+(1-\mu)R_6(X,Y)Z
\end{align*}
where $R_1,R_2, R_3, R_4, R_5, R_6$ are defined as follows
\begin{align*}
	&R_1(X,Y)Z=g(Y,Z)X-g(X,Z)Y,\\&
	R_2(X,Y)Z=g(X,\phi Z)\phi Y-g(Y,\phi Z)\phi X+2g(X,\phi Y)\phi Z,\\&
	R_3(X,Y)Z=\eta(X)\eta(Z)Y-\eta(Y)\eta(Z)X+g(X,Z)\eta(Y)\xi-g(Y,Z)\eta(X)\xi,\\&
	R_4(X,Y)Z= g(Y,Z)hX-g(X,Z)hY+g(hY,Z)X-g(hX,Z)Y,\\&
	R_5(X,Y)Z=g(hY,Z)hX-g(hX,Z)hY+g(\phi hX,Z)\phi hY-g(\phi hY,Z)\phi hX,\\&
	R_6(X,Y)Z=\eta(X)\eta(Z)hY-\eta(Y)\eta(Z)hX+g(hX,Z)\eta(Y)\xi-g(hY,Z)\eta X\xi,
\end{align*}
for any vector fields $X, Y, Z$ on $M$.
As a generalization of $(\kappa,\mu)$-space form, Carriazo et. al. \cite{Tripathi} introduced the notion of generalized $(\kappa,\mu)$-space form and provided interesting examples of such spaces. An almost contact metric manifold is called a generalized $(\kappa,\mu)$-space form if there exist smooth functions $f_1, f_2, f_3, f_4, f_5, f_6$ such that
\begin{align*}
	R(X,Y)Z=f_1R_1(X,Y)Z&+f_2R_2(X,Y)Z+f_3R_3(X,Y)Z+f_4R_4(X,Y)Z\\&+f_5R_5(X,Y)Z+f_6R_6(X,Y)Z
\end{align*}
where $R_1, R_2,R_3,R_4,R_5,R_6$ are defined as above. 
\begin{example}\cite{Kouf_TSI}
	Consider the 3-dimensional manifold $M=\{(x_1,x_2,x_3)\in \mathbb{R}^3|x_3\neq0\}$, where $(x_1,x_2,x_3)$ are the standard coordinates in $\mathbb{R}^3$. The vector fields,
	$$e_1=\frac{\partial}{\partial x_1}, \hspace{.25cm}e_2=-2x_2x_3\frac{\partial}{\partial x_1}+2\frac{x_1}{{x_3}^3}\frac{\partial}{\partial x_2}-\frac{1}{x_3^2}\frac{\partial}{\partial x_3}, e_3=\frac{1}{x_3}\frac{\partial}{\partial x_2},$$
	are linearly independent at each point of $M$. Let $g$ be the Riemannian metric defined by $g(e_i,e_j)=\delta_{ij}, i,j=1,2,3.$ Let $\nabla$ be the Riemannian connection and $R$ the curvature tensor of $g$. We easily get
	$$[e_1,e_2]=\frac{2}{{x_3}^2}e_3,\hspace{.25cm}[e_2,e_3]=2e_1+\frac{1}{{x_3}^3}e_3,\hspace{.25cm} [e_3,e_1]=0.$$
	Let $\eta$ be the 1-form defined by $\eta(z)=g(z,e_1)$ for any $z \in \mathcal{X}(M).$ Because $\eta \wedge d\eta \neq 0,$ everywhere on $M$, $\eta$ is a contact form. Let $\phi$ be the $(1,1)$-tensor field, defined by $\phi e_1=0, \hspace{.15cm}\phi e_2=e_3, \hspace{.15cm}\phi e_3=-e_2$. Using the linearity of $\phi, d\eta,$ and $g$, we have $$\eta(e_1)=1, \hspace{.15cm}\phi^2z=z-\eta(z)e_1,\hspace{.15cm} d\eta(z,w)=g(z,\phi w)$$
	and
	$$g(\phi z,\phi w)=g(z,w)-\eta(z)\eta(w)$$
	 for any $z, w \in \mathcal{X}(M).$
	 Hence $(\phi,e_1,\eta,g)$ defines a contact metric structure $M$. So $M$ with this structure is a contact metric manifold. \\
	 Putting $\xi=e_1$, $x=e_2$, $\phi x=e_3$ and using the well known formula
	 \begin{eqnarray*}
	 	2g(\nabla_yz,w)&=&yg(z,w)+zg(w,y)-wg(y,z)-g\big(y,[z,w]\big)\\&&-g\big(z,[y,w]\big)+g\big(w,[y,z]\big),
	 \end{eqnarray*}
	 
	 We calculate
	 $$\nabla_x\xi=-(1+\frac{1}{{x_3}^2})\phi x,\hspace{.25 cm}\nabla_{\phi x}\xi=(1-\frac{1}{{x_3}^2})x,$$
	 $$\nabla_\xi x=(-1+\frac{1}{{x_3}^2})\phi x,\hspace{.25 cm} \nabla_\xi\phi x=(1-\frac{1}{{x_3}^2}) x,$$
	 $$\nabla_xx=0,\hspace{.25 cm}\nabla_x\phi x=(1+\frac{1}{{x_3}^2})\xi,$$
	 $$\nabla_{\phi x}x=(-1+\frac{1}{{x_3}^2})\xi-\frac{1}{{x_3}^3}\phi x,\hspace{.25 cm} \nabla_{\phi x}\phi x=\frac{1}{{x_3}^3}x.$$
	 Therefore for the tensor field $h$, we get $h\xi=0$, $hx=\lambda x$, and $\kappa=\frac{{x_3}^4-1}{{x_3}^4}$, we finally get
	 $$ R(x,\xi)\xi=\kappa\big(\eta(\xi)x-\eta(x)\xi\big)+\mu \big(\eta(\xi)hx-\eta(x)h\xi \big)$$
	 $$R(\phi x,\xi)\xi=\kappa\big(\eta(\xi)\phi x-\eta(\phi x)\xi\big)+\mu \big(\eta(\xi)h\phi x-\eta(\phi x)h\xi \big)$$
	  $$ R(x,\phi x)\xi=\kappa\big(\eta(\phi x)x-\eta(x)\phi x\big)+\mu \big(\eta(\phi x)hx-\eta(x)h\phi x \big).$$
	  These relations yield the following, by a straight forward calculations,
	  $$ R(z,w)\xi=\kappa\big(\eta(w)z-\eta(z)w\big)+\mu \big(\eta(w)hz-\eta(z)hw \big),$$
	  where $\kappa$ and $\mu$ are non-constant smooth functions. Hence $M$ is a generalized $(\kappa,\mu)$-contact metric manifold.
\end{example}
 Submanifolds of generalized $(\kappa,\mu)$-space forms were studied by Hui et. al. \cite{Hui}. Lee et. al. \cite{Vilcu} studied generalized Wintgen inequality for submanifolds in generalized $(\kappa,\mu)$-space forms.\\
For a generalized $(\kappa,\mu)$-space form, we have the following relations \cite{Tripathi}
\begin{align}\label{2.5}
	\nabla_X\xi=(f_3-f_1)\phi X+(f_6-f_4)\phi hX
	\end{align}
	\begin{align}\label{2.6}
		(\nabla_X\eta)Y=(f_3-f_1)g(\phi X,Y)+(f_6-f_4)g(\phi hX,Y)
	\end{align}
	\begin{align}\label{2.7}
		(\nabla_X\phi)Y=&\nonumber(f_1-f_3)[g(X,Y)\xi-\eta(Y)X]\\&+(f_4-f_6)[g(hX,Y)\xi-\eta(Y)hX]
	\end{align}
	\begin{align}\label{2.8}
			R(X,Y)\xi=&\nonumber(f_1-f_3)\{\eta(Y)X-\eta(X)Y\}\\&+(f_4-f_6)\{\eta(Y)hX-\eta(X)hY\}
	\end{align}
	\begin{align}\label{2.9}
		R(\xi,Y)Z=&\nonumber(f_1-f_3)[g(Y,Z)\xi-\eta(Z)Y]\\&+(f_4-f_6)[g(hY,Z)\xi-\eta(Z)hY]
	\end{align}
\begin{align}\label{2.10}
		S(X,Y)=&\nonumber(2nf_1+3f_2-f_3)g(X,Y)-\big(3f_2+(2n-1)f_3\big)\eta(X)\eta(Y)\\&+\big((2n-1)f_4-f_6\big)g(hX,Y)
\end{align}
\begin{align}\label{2.11}
		QX=&\nonumber(2nf_1+3f_2-f_3)X-\big(3f_2+(2n-1)f_3\big)\eta(X)\xi\\&+\big((2n-1)f_4-f_6\big)hX
\end{align}
\begin{align}\label{2.12}
		S(X,\xi)=2n(f_1-f_3)\eta(X),
\end{align}
\begin{align}\label{2.13}
	Q\xi=2n(f_1-f_3)\xi
\end{align}
for all  vector fields $X$ and $Y$ in $TM$ and where $S$ is the Ricci tensor and $Q$ is the Ricci operator related by $S(X,Y)=g\big(Q(X),Y\big)$.
\section{Main Results}
\begin{theorem}
Consider a generalized $(\kappa,\mu)$-space form $M$ admitting a conformal Ricci soliton $(g,V,\lambda)$. Then the soliton is 
\begin{enumerate}
\item shrinking if $p<[4n(f_3-f_1)-\frac{2}{2n+1}]$
\item steady if $p=[4n(f_3-f_1)-\frac{2}{2n+1}]$
\item expanding if $p>[4n(f_3-f_1)-\frac{2}{2n+1}]$
\end{enumerate}
\end{theorem}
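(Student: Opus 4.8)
The natural route is to test the conformal Ricci soliton equation \eqref{1.1} against the characteristic vector field $\xi$. First I would take the potential field to be $V=\xi$ and rewrite the Lie-derivative term: since $(\mathcal{L}_\xi g)(X,Y)=(\nabla_X\eta)(Y)+(\nabla_Y\eta)(X)$, the identity \eqref{2.6} gives
\[
(\mathcal{L}_\xi g)(X,Y)=(f_3-f_1)\bigl[g(\phi X,Y)+g(\phi Y,X)\bigr]+(f_6-f_4)\bigl[g(\phi hX,Y)+g(\phi hY,X)\bigr].
\]
Using the skew-symmetry $g(\phi X,Y)=-g(X,\phi Y)$ the first bracket vanishes, while $h\phi=-\phi h$ together with the symmetry of $h$ makes $\phi h$ self-adjoint, so the second bracket equals $2g(\phi hX,Y)$; hence $(\mathcal{L}_\xi g)(X,Y)=2(f_6-f_4)g(\phi hX,Y)$. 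In particular, because $h\xi=0$, we obtain $(\mathcal{L}_\xi g)(\xi,\xi)=0$.

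Next I would evaluate \eqref{1.1} on the pair $(\xi,\xi)$. With $g(\xi,\xi)=\eta(\xi)=1$ and the Ricci relation \eqref{2.12}, which gives $S(\xi,\xi)=2n(f_1-f_3)$, the whole equation collapses to the single scalar identity
\[
4n(f_1-f_3)=2\lambda-\Bigl(p+\tfrac{2}{2n+1}\Bigr),
\]
i.e.\ a linear relation between the soliton constant $\lambda$ and the conformal pressure $p$. Solving this for $\lambda$ and reading off its sign then yields the trichotomy: $\lambda$ is positive, zero or negative precisely according to how $p$ stands relative to $4n(f_3-f_1)-\tfrac{2}{2n+1}$, which is exactly the shrinking / steady / expanding statement claimed.

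I do not expect a genuine obstacle here; the computation is short. The one place that needs care is the evaluation of $(\mathcal{L}_\xi g)(\xi,\xi)$: one must use the structure equations $h\xi=0$, $h\phi=-\phi h$ and the formula \eqref{2.5} for $\nabla_X\xi$ rather than assuming $\xi$ is Killing, since the $(f_6-f_4)$-term of $\mathcal{L}_\xi g$ does not vanish in general (it only drops out when contracted twice with $\xi$). A secondary point worth flagging is that the statement carries no hypothesis on $V$, so the argument is implicitly run with the canonical choice $V=\xi$; for a general potential field one would instead obtain a relation involving $\operatorname{div}V$, which would not produce the clean pressure criterion above.
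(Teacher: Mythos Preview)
Your strategy coincides with the paper's: both evaluate the conformal Ricci soliton equation on the pair $(\xi,\xi)$ and invoke $S(\xi,\xi)=2n(f_1-f_3)$ from \eqref{2.12} to obtain $\lambda=2n(f_1-f_3)+\tfrac{p}{2}+\tfrac{1}{2n+1}$, from which the trichotomy follows.

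The only substantive difference is in how the Lie-derivative term is disposed of. You fix $V=\xi$ at the outset and compute the full tensor $\mathcal{L}_\xi g$ via \eqref{2.6}, then observe that it vanishes on $(\xi,\xi)$ because $h\xi=0$. The paper keeps $V$ arbitrary and instead writes $(\mathcal{L}_Vg)(\xi,\xi)=2g(\nabla_\xi V,\xi)$, then uses metric compatibility together with $\nabla_\xi\xi=0$ to make this term disappear. Your computation is correct but longer than is needed for the $(\xi,\xi)$ evaluation; on the other hand, your closing caveat is well placed: the paper's passage from $g(\nabla_\xi V,\xi)$ to $-g(V,\nabla_\xi\xi)$ tacitly drops the term $\xi\bigl(\eta(V)\bigr)$ coming from $\nabla g=0$, so for a genuinely arbitrary potential field neither argument is complete without an additional hypothesis such as $V=\xi$ or $\eta(V)$ constant along $\xi$.
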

\begin{proof}
Since $M$ admits a conformal Ricci soliton$(g,V,\lambda)$, from (\ref{1.1}) we have,
\begin{eqnarray*}
	(L_Vg)(X,Y)+2S(X,Y)+[2\lambda-(p+\frac{2}{2n+1})] g(X,Y)=0
\end{eqnarray*}
Using the property of Lie derivative, we get
\begin{eqnarray*}
	g(\nabla_XV,Y)+g(X,\nabla_YV)+2S(X,Y)+[2\lambda-(p+\frac{2}{2n+1})] g(X,Y)=0
\end{eqnarray*}
Substituting $X=Y=\xi$ and using (\ref{2.3}), we get
\begin{eqnarray*}
2g(\nabla_{\xi}V,\xi)+2S(\xi,\xi)+[2\lambda-(p+\frac{2}{2n+1})]=0
\end{eqnarray*}
Using the fact that $\nabla$ is a metric connection, we have
\begin{eqnarray*}
-2g(V,\nabla_{\xi}\xi)+ 2S(\xi,\xi)+[2\lambda-(p+\frac{2}{2n+1})]=0
\end{eqnarray*}
Since $\nabla_{\xi}\xi=0$, we get from the above equation
\begin{eqnarray*}
	2S(\xi,\xi)+[2\lambda-(p+\frac{2}{2n+1})]=0
\end{eqnarray*}
Now using (\ref{2.12}) and substituting the value of $S(\xi,\xi)$, we get
\begin{eqnarray*}
	4n(f_1-f_3)+[2\lambda-(p+\frac{2}{2n+1})]=0
\end{eqnarray*}
Rearranging the above equation, we get
\begin{eqnarray*}
\lambda=2n(f_1-f_3)+(\frac{p}{2}+\frac{1}{2n+1})
\end{eqnarray*}
Upon using three different conditions on $\lambda$ in the above equations, we get the desired expressions.
\end{proof}
\begin{theorem}
		Consider a $(2n+1)$-dimensional Ricci semi-symmetric generalized $(\kappa,\mu)$-space form $M$ admitting a conformal Ricci soliton $(g,V,\lambda)$. If $f_4=f_6$, then the manifold is Einstein and the potential vector field $V$ is a conformal vector field.
\end{theorem}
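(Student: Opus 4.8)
The plan is to exploit the Ricci semi-symmetry hypothesis $R\cdot S=0$, which written out means
\[
S\big(R(X,Y)Z,W\big)+S\big(Z,R(X,Y)W\big)=0
\]
for all vector fields $X,Y,Z,W$ on $M$. The strategy is to feed the characteristic vector field $\xi$ into the first slot of the curvature operator so that, using the hypothesis $f_4=f_6$, the curvature terms reduce to the elementary expressions coming from the formulas of Section 2.

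First I would set $X=W=\xi$, which turns the displayed identity into $S\big(R(\xi,Y)Z,\xi\big)+S\big(Z,R(\xi,Y)\xi\big)=0$. When $f_4=f_6$, relation (\ref{2.9}) becomes $R(\xi,Y)Z=(f_1-f_3)\{g(Y,Z)\xi-\eta(Z)Y\}$, and in particular $R(\xi,Y)\xi=(f_1-f_3)\{\eta(Y)\xi-Y\}$. Substituting these and replacing every occurrence of $S(\,\cdot\,,\xi)$ by $2n(f_1-f_3)\eta(\,\cdot\,)$ through (\ref{2.12}), the terms quadratic in $\eta$ cancel and one is left with
\[
(f_1-f_3)\big[\,2n(f_1-f_3)\,g(Y,Z)-S(Y,Z)\,\big]=0 .
\]
In the generic situation $f_1\neq f_3$ this gives $S(Y,Z)=2n(f_1-f_3)\,g(Y,Z)$, i.e.\ $M$ is an Einstein manifold. (The degenerate case $f_1=f_3$ makes $R(\xi,\cdot)\,\cdot$ vanish on these arguments, so it yields no information and should be excluded or assumed away.)

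To read off the conformal character of $V$, substitute $S=2n(f_1-f_3)g$ into the conformal Ricci soliton equation (\ref{1.1}). This gives
\[
(\mathcal{L}_Vg)(X,Y)=\Big[\,2\lambda-\big(p+\tfrac{2}{2n+1}\big)-4n(f_1-f_3)\,\Big]g(X,Y),
\]
that is $\mathcal{L}_Vg=2\rho\,g$ with $\rho=\lambda-\tfrac12\big(p+\tfrac{2}{2n+1}\big)-2n(f_1-f_3)$. Hence $V$ is a conformal vector field and the proof is complete.

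The computation is routine once one chooses the arguments of $R\cdot S=0$ so that the $\eta$-terms telescope; the step that needs genuine care is the dichotomy $f_1=f_3$ versus $f_1\neq f_3$, since the Einstein conclusion really does require the operator $Y\mapsto R(\xi,Y)\xi$ to be non-degenerate. It is also worth checking that the $h$-dependent term in the Ricci tensor (\ref{2.10}) never interferes: it drops out of all the $\xi$-contractions above because $h\xi=0$ and $\eta\circ h=0$, so the argument proceeds unchanged.
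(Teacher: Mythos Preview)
Your proof is correct and follows essentially the same route as the paper: both exploit $R\cdot S=0$ by inserting $\xi$ into the curvature arguments, use (\ref{2.9}) with $f_4=f_6$ and (\ref{2.12}) to collapse everything to $(f_1-f_3)\big[2n(f_1-f_3)g(Y,Z)-S(Y,Z)\big]=0$, and then substitute the resulting Einstein relation into (\ref{1.1}). The only difference is cosmetic---the paper sets $U=\xi$ first and then $X=\xi$, whereas you set $X=W=\xi$ in one shot---and you are in fact more careful than the paper in flagging the implicit division by $f_1-f_3$, which the paper's ``simplifying'' step performs without comment.
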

\begin{proof}
	
	Since the manifold $M$ is Ricci semi-symmetric, for any vector fields $X$ and $Y$ on $M$, we have
	\begin{eqnarray*}
		R(X,Y) . S=0,
	\end{eqnarray*}
	where $R$ is the curvature tensor and $S$ is the Ricci tensor of $M$.\\
	The above equation can be written as
	\begin{eqnarray*}
		S\big(R(X,Y)Z,U\big)+S\big(Z,R(X,Y)U\big)=0,
	\end{eqnarray*}
	where $X,Y,Z, U$ are vector fields on $M$.\\
	Replacing $U$ by $\xi$ in the above equation, we get
	\begin{eqnarray*}
		S\big(R(X,Y)Z,\xi\big)+S\big(Z,R(X,Y)\xi\big)=0
	\end{eqnarray*}
	Using the fact that $S(X,Y)=g(QX,Y)$ where $Q$ is the Ricci operator, we have
	\begin{eqnarray*}
		g\big(Q\xi,R(X,Y)Z\big)+S\big(Z,R(X,Y)\xi\big)=0
	\end{eqnarray*}
	Now using (\ref{2.8}) and (\ref{2.13}) in the avove equation, we get
	\begin{align*}
	g\big(&2n(f_1-f_3)\xi, R(X,Y)Z\big)+\\&S\big(Z,(f_1-f_3)[\eta(Y)X-\eta(X)Y]+(f_4-f_6)[\eta(Y)hX-\eta(X)hY]\big)=0
	\end{align*}
		Simplifying and make use of (\ref{2.3}) and the fact that $f_4=f_6$, we get
		\begin{align*}
		2n&(f_1-f_3)\eta\big(R(X,Y)Z\big)+(f_1-f_3)[\eta(Y)S(Z,X)-\eta(X)S(Z,Y)]=0
		\end{align*}
			Putting $X=\xi$, we get
			\begin{align*}
			2n(f_1-f_3)&\eta\big(R(\xi,Y)Z\big)+(f_1-f_3)[\eta(Y)S(Z,\xi)-S(Z,Y)]=0
			\end{align*}
				Making use of (\ref{2.9}), (\ref{2.12}) and then simplifying, we get
\begin{align}\label{3.2}
S(Y,Z)=2n(f_1-f_3)g(Y,Z),
\end{align}
Thus it is clear from the expression (\ref{3.2}) that $M$ is an Einstein manifold.\\
Since $M$ admits a conformal Ricci soliton$(g,V,\lambda)$, from (\ref{1.1}) we have,
\begin{eqnarray*}
	(L_Vg)(X,Y)+2S(X,Y)+[2\lambda-(p+\frac{2}{2n+1})] g(X,Y)=0
\end{eqnarray*}
Making use of (\ref{3.2}) in the above expression, we get
\begin{align*}
	(L_Vg)(X,Y)=[2\lambda-4n(f_1-f_3)-(p+\frac{2}{2n+1})] g(X,Y)
\end{align*}
Putting $\delta=[2\lambda-4n(f_1-f_3)-(p+\frac{2}{2n+1})]$, we can write
\begin{align}\label{3.3}
	\mathcal{L}_Vg=\delta g.
\end{align}
From (\ref{3.3}), we conclude that $V$ is a conformal vector field.
\end{proof}
\begin{theorem}
	Consider a $(2n+1)$-dimensional generalized $(\kappa,\mu)$-space form $M$ admitting a conformal Ricci soliton $(g,V,\lambda)$ whose potential vector field $V$ is pointwise collinear with the Reeb vector field $\xi$. Then V is constant multiple of $\xi$ and $M$ is an Einstein manifold of scalar curvature $r=2n(2n+1)(f_1-f_3)$.
\end{theorem}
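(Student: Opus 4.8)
The plan is to exploit the collinearity hypothesis directly. Write the potential field as $V = b\,\xi$ for a smooth function $b$ on $M$, substitute into the conformal Ricci soliton equation \eqref{1.1}, and extract in turn an equation for $\operatorname{grad} b$, then the constancy of $b$, and finally the Einstein property together with the value of $r$.

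First I would compute the Lie derivative from $\nabla_X(b\xi) = (Xb)\xi + b\nabla_X\xi$ and \eqref{2.5}: since $\phi$ is skew-symmetric the pure $\phi$-terms cancel, while $\phi h$ is self-adjoint (this follows from $g(\phi E,F)=-g(E,\phi F)$, symmetry of $h$ and $h\phi=-\phi h$) and survives, leaving
\[
(\mathcal{L}_Vg)(X,Y) = (Xb)\eta(Y) + (Yb)\eta(X) + 2b(f_6-f_4)\,g(\phi hX,Y).
\]
Substituting this and the Ricci formula \eqref{2.10} into \eqref{1.1}, then setting $Y=\xi$ and using $h\xi=0$, \eqref{2.3} and \eqref{2.12}, yields an identity $Xb + (\xi b)\eta(X) + c\,\eta(X)=0$ with $c = 4n(f_1-f_3)+2\lambda-(p+\tfrac{2}{2n+1})$. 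Putting $X=\xi$ gives $c=-2(\xi b)$, and back-substitution produces $Xb = (\xi b)\eta(X)$, that is, $\operatorname{grad} b = (\xi b)\,\xi$.

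Next I would upgrade this to ``$b$ is constant'': differentiating $\operatorname{grad} b=(\xi b)\xi$ once more with \eqref{2.5} and using that the Hessian $\nabla^2 b$ is symmetric, the skew part of the resulting identity gives $(\xi b)(f_1-f_3)\,g(\phi X,Y)=0$ for all $X,Y$ (equivalently, apply $d$ to $db=(\xi b)\eta$ and use that \eqref{2.6} makes $d\eta$ a nonzero multiple of $g(\phi\cdot,\cdot)$ when $f_1\neq f_3$). Since $\phi$ has rank $2n$, this forces $\xi b\equiv 0$ wherever $f_1\neq f_3$, hence $b$ is constant and $V$ a constant multiple of $\xi$. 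Then \eqref{1.1} reduces to $2b(f_6-f_4)g(\phi hX,Y)+2S(X,Y)+[2\lambda-(p+\tfrac{2}{2n+1})]g(X,Y)=0$; evaluating at $X=Y=\xi$ pins $2\lambda-(p+\tfrac{2}{2n+1})=-4n(f_1-f_3)$, so $S(X,Y)=2n(f_1-f_3)g(X,Y)-b(f_6-f_4)g(\phi hX,Y)$. The metric trace (with $\operatorname{tr}(\phi h)=0$) gives $r=2n(2n+1)(f_1-f_3)$ at once; equating the displayed $S$ with \eqref{2.10} and taking its trace also yields $3f_2+(2n-1)f_3=0$, and a short case-split on whether $h$ vanishes (using that $\phi h$ anti-commutes with $h$) removes the residual $g(h\cdot,\cdot)$ and $g(\phi h\cdot,\cdot)$ terms, leaving $S=2n(f_1-f_3)g$ — so $M$ is Einstein.

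I expect the main obstacle to be the rigidity step $\operatorname{grad} b\parallel\xi \Rightarrow b=\text{const}$, which genuinely uses that the structure is of contact type (non-degeneracy of $d\eta$, i.e. $f_1\neq f_3$), and, secondarily, the bookkeeping that strips the anisotropic $h$-terms off $S$, where the coefficient identity $3f_2+(2n-1)f_3=0$ produced by the computation must be combined with the eigenstructure of $h$. By contrast, once $b$ is constant the value of $r$ falls out of a single contraction.
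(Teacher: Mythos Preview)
Your plan follows essentially the same route as the paper: write $V=b\xi$, expand $\mathcal{L}_{V}g$ via \eqref{2.5}, set $Y=\xi$ to obtain $db=(\xi b)\,\eta$, apply $d$ and use $d\eta\neq 0$ to force $b$ constant, and then read off the Einstein condition and $r$ by contraction. The only substantive difference is that the paper silently drops the symmetric $2b(f_6-f_4)g(\phi hX,Y)$ contribution when passing to its equation \eqref{3.4}, so it gets $S=[\lambda-(\tfrac{p}{2}+\tfrac{1}{2n+1})]g$ directly, whereas you (correctly) keep that term and then have to argue it away via the coefficient identity and the $h$/$\phi h$ algebra; your version is therefore more careful, and your observation that $d\eta\neq 0$ needs $f_1\neq f_3$ is likewise a point the paper glosses over.
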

\begin{proof}
Let's assume that $V= b\xi$ for some smooth function $b$. Then from (\ref{1.1}) we can write
\begin{align*}
bg(\nabla_X\xi,Y)+bg(X,\nabla_Y\xi)&+X(b)\eta(Y)+Y(b)\eta(X)+2S(X,Y)=\\&[2\lambda-(p+\frac{2}{2n+1})]g(X,Y)
\end{align*}
Using (\ref{2.5}) in the above equation, we get,
\begin{align}\label{3.4}
	X(b)\eta(Y)+Y(b)\eta(X)+2S(X,Y)=[2\lambda-(p+\frac{2}{2n+1})]g(X,Y)
\end{align}
Putting $Y=\xi$ and using (\ref{2.12}), we get,
\begin{align}\label{3.5}
	X(b)=[2\lambda-(p+\frac{2}{2n+1})-4n(f_1-f_3-\xi(b))]\eta(X)
\end{align}
Putting $X=\xi$, we get
\begin{align}\label{3.6}
	\xi(b)=[\lambda-(\frac{p}{2}+\frac{1}{2n+1})-2n(f_1-f_3)]
\end{align}
In view of (\ref{3.5}) and (\ref{3.6}), we can write
\begin{align}\label{3.7}
	db=[\lambda-(\frac{p}{2}+\frac{1}{2n+1})-2n(f_1-f_3)]\eta
\end{align}
Operating by $d$ on both sides of (\ref{3.7}) and uing the fact that $d^2=0$, we get
\begin{align}\label{3.8}
[\lambda-(\frac{p}{2}+\frac{1}{2n+1})-2n(f_1-f_3)]d\eta
\end{align}
Since $d\eta\neq 0$, we get from (\ref{3.8})
\begin{align}\label{3.9}
	\lambda=2n(f_1-f_3)+(\frac{p}{2}+\frac{1}{2n+1}).
\end{align}
Substituting (\ref{3.8})in (\ref{3.6}), we get $db=0$ which implies that $b$ is constant.Thus $V$ is constant multiple of $\xi$ which proves the first part of the theorem.\\
To  prove the second part of the theorem, we consider an orthonormal basis $\{e_i:1\leq i\leq 2n+1\}$ at each point of the manifold and put $X=Y=e_i$ in (\ref{3.4}) and summing over $1\leq i\leq(2n+1)$, we get
\begin{align}\label{3.10}
	\xi(b)+r=(2n+1)[\lambda-(\frac{p}{2}+\frac{1}{2n+1})].
\end{align}
Using the fact that $b$ is constant in (\ref{3.10})  we get,
\begin{align}\label{3.11}
	r=(2n+1)[\lambda-(\frac{p}{2}+\frac{1}{2n+1})].
\end{align}
Using (\ref{3.9}) in (\ref{3.11}), we get
\begin{align*}
	r=2n(2n+1)(g_1-g_3),
\end{align*}
which is the desired epression for the scalar curvature of the manifold.\\
Also, using the fact that $b$ is constant in(\ref{3.4}), we get
\begin{align*}
S(X,Y)=[\lambda-(\frac{p}{2}+\frac{1}{2n+1})]g(X,Y).
\end{align*}
Hence $M$ is an Einstein manifold.
\end{proof}
\begin{lemma}\label{l3.5}
Consider a $(2n+1)-$dimensional generalized $(\kappa,\mu)$-space form $M$ admitting a conformal gradient Ricci soliton $(g,\nabla f,\lambda)$. Then the curvature tensor $R$ satisfies\begin{small}
\begin{align*}
R(X,Y)\nabla f=& (2ndf_1+3df_2-df_3)(Y)X-(2ndf_1+3df_2-df_3)(X)Y\\&+\big(3f_2+(2n-1)f_3\big)(g_1-g_3)[2g(\phi X,Y)\xi+\eta(X)\phi X-\eta
(Y)\phi Y]\\&+\big(3f_2+(2n-1)f_3\big)(g_4-g_6)[2g(\phi hX,Y)\xi+\phi hX-\phi hY]\\&-\big(3df_2+(2n-1)df_3\big)(Y)\eta(X)\xi+\big(3df_2+(2n-1)df_3\big)(X)\eta(Y)\xi\\& +\big((2n-1)f_4-f_6\big)[(\nabla_Yh)X-(\nabla_Xh)Y]+\big((2n-1)df_4-df_6\big)(Y)hX\\&-\big((2n-1)df_4-df_6\big)(X)hY.
\end{align*}
\end{small}

\end{lemma}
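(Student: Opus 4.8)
The plan is to feed the gradient-soliton relation into the definition of the Riemann curvature tensor and then unwind the Ricci operator with the structure equations \eqref{2.5}, \eqref{2.6} and \eqref{2.11}. First I would recast the defining identity $\nabla\nabla f+S=[2\lambda-(p+\tfrac{2}{2n+1})]g$ in operator form: since $S(X,Y)=g(QX,Y)$ it reads
\[
\nabla_X\nabla f=\alpha X-QX,\qquad \alpha:=2\lambda-\Big(p+\tfrac{2}{2n+1}\Big),
\]
for every vector field $X$. I will treat $\alpha$ as constant on $M$; this is what the statement tacitly requires, since a non-constant $\alpha$ would contribute an extra term of the shape $(X\alpha)Y-(Y\alpha)X$ to the final identity and no such term appears.

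Next I would put $Z=\nabla f$ into $R(X,Y)Z=\nabla_X\nabla_YZ-\nabla_Y\nabla_XZ-\nabla_{[X,Y]}Z$. Writing $\nabla_Y\nabla f=\alpha Y-QY$ and differentiating gives $\nabla_X(\alpha Y-QY)=\alpha\nabla_XY-(\nabla_XQ)Y-Q(\nabla_XY)$, and similarly with $X$ and $Y$ interchanged. Assembling the three pieces, the torsion-free identity $\nabla_XY-\nabla_YX-[X,Y]=0$ cancels the $\alpha$-terms and collapses the remaining $Q$-terms into $-Q(\nabla_XY-\nabla_YX-[X,Y])=0$, leaving the clean formula
\[
R(X,Y)\nabla f=(\nabla_YQ)X-(\nabla_XQ)Y .
\]

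Then I would expand $(\nabla_YQ)X$ straight from \eqref{2.11}. Covariantly differentiating $QX=(2nf_1+3f_2-f_3)X-(3f_2+(2n-1)f_3)\eta(X)\xi+((2n-1)f_4-f_6)hX$ and subtracting $Q(\nabla_YX)$, all $\nabla_YX$-contributions disappear, leaving the coefficient-derivative ($df_i$) terms, the piece $-(3f_2+(2n-1)f_3)\big[(\nabla_Y\eta)(X)\,\xi+\eta(X)\,\nabla_Y\xi\big]$, and the piece $((2n-1)f_4-f_6)(\nabla_Yh)X$. Now I would substitute \eqref{2.5} for $\nabla_Y\xi$ and \eqref{2.6} for $(\nabla_Y\eta)(X)$, and antisymmetrize in $X$ and $Y$, using the skew-symmetry $g(\phi X,Y)=-g(\phi Y,X)$ of $\phi$ together with $\phi\xi=0$ and $h\xi=0$ to reorganize the $\xi$-, $\phi$- and $\phi h$-valued contributions; the terms $(\nabla_Yh)X-(\nabla_Xh)Y$ and the $df_i$-terms are read off directly, which produces the asserted formula.

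The only genuinely delicate step is this last antisymmetrization: one must keep the $f_1,f_3$ (the $\phi$-type) contributions separate from the $f_4,f_6$ (the $\phi h$-type) ones and track signs carefully; the numerical factors in front of the $\xi$-valued terms arise precisely from pairing $g(\phi X,Y)$ against $-g(\phi Y,X)$. Everything preceding that reduction is just the product rule applied to \eqref{2.11} together with the metric compatibility and torsion-freeness of $\nabla$.
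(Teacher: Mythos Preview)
Your proposal is correct and follows exactly the paper's own argument: derive $R(X,Y)\nabla f=(\nabla_YQ)X-(\nabla_XQ)Y$ from the soliton equation and the definition of $R$, then compute $(\nabla_YQ)X$ by differentiating \eqref{2.11}, insert \eqref{2.5} and \eqref{2.6}, and antisymmetrize. The only cosmetic difference is that the paper writes the constant in the soliton equation as $\lambda-(\tfrac{p}{2}+\tfrac{1}{2n+1})$ rather than your $\alpha$, but, as you observe, this constant drops out and plays no role in the final formula.
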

\begin{proof}
Since $(g,\nabla f,\lambda)$ is a conformal gradient Ricci soliton on $N$, for any vector field $X$ on $N$, we can write
\begin{align*}
	\nabla_X\nabla f=[\lambda-(\frac{p}{2}+\frac{1}{2n+1})]X-QX,
\end{align*}
where $Q$ is the Ricci operator.
Differentiating covariantly with respect to arbitrary vector field $Y$, we get
\begin{align*}
\nabla_Y\nabla_X\nabla f=[\lambda-(\frac{p}{2}+\frac{1}{2n+1})]\nabla_YX-\nabla_YQX.
\end{align*}
Interchanging $X$ and $Y$ in the above equaton we get,
\begin{align*}
\nabla_X\nabla_Y\nabla f=[\lambda-(\frac{p}{2}+\frac{1}{2n+1})]\nabla_XY-\nabla_XQY.
\end{align*}
 Also, we can write
 \begin{align*}
 	\nabla_{[X,Y]}\nabla f=[\lambda-(\frac{p}{2}+\frac{1}{2n+1})](\nabla_XY-\nabla_YX)-Q(\nabla_XY-\nabla_YX).
 \end{align*} Using the above equations in the expression for curvature tensor we get
 \begin{align}\label{3.12}
 	R(X,Y)\nabla f=(\nabla_YQ)X-(\nabla_XQ)Y.
 \end{align}
 Now,
 \begin{align}\label{3.13}
 	(\nabla_YQ)X=\nabla_YQX-Q\nabla_YX.
 \end{align}
 Differentiating covariantly (\ref{2.11}) with respect to $Y$, we get
 \begin{align}\label{3.14}
 	\nabla_YQX=&\nonumber(2nf_1+3f_2-f_3)\nabla_YX+(2ndf_1+3df_2-df_3)(Y)X\\&\nonumber+\big((2n-1)f_4-f_6\big)\nabla_YhX+\big((2n-1)df_4-df_6\big)(Y)hX\\&\nonumber-\big(3f_2+(2n-1)f_3\big)[\nabla_Y\eta(X)\xi+\eta(X)\nabla_Y\xi]\\&-\big(3df_2+(2n-1)df_3\big)(Y)\eta(X)\xi.
 \end{align}
 Also, from (\ref{2.11}), we can write
 \begin{align}\label{3.15}
 	Q\nabla_YX=&\nonumber\big(2nf_1+3f_2-f_3\big)\nabla_XY+\big((2n-1)(f_4-f_6)h\nabla_YX\big)\\&-\big(3f_2+(2n-1f_3)\big)\eta(\nabla_YX)\xi.
 \end{align}
 Using (\ref{3.14}) and (\ref{3.15}) in (\ref{3.13}), and recalling (\ref{2.5}) and (\ref{2.6}) we get,
 \begin{align}\label{3.16}
 (\nabla_YQ)X=&\nonumber\big(2ndf_1+3df_2-df_3\big)(Y)X+\big((2n-1)f_4-f_6\big)(\nabla_Yh)X\\&\nonumber+\big((2n-1)df_4-df_6\big)(Y)hX\\&\nonumber-\big(3f_2+(2n-1)f_3\big)[(f_3-f_1)g(\phi X,Y)+(f_6-f_4)g(\phi hX,Y)]\xi\\&\nonumber-(3f_2+(2n-1)f_3)\eta(X)[(f_3-f_1)\phi X+(f_6-f_4)\phi hX]\\&-(3df_2+(2n-1)df_3)(Y)\eta(X)\xi.
 \end{align}
 Interchanging $X$ and $Y$ in (\ref{3.16}), we get
 \begin{align}\label{3.17}
 	(\nabla_XQ)Y=&\nonumber\big(2ndf_1+3df_2-df_3\big)(X)Y+\big((2n-1)f_4-f_6\big)(\nabla_Xh)Y\\&\nonumber+\big((2n-1)df_4-df_6\big)(X)hY\\&\nonumber-\big(3f_2+(2n-1)f_3\big)[(f_3-f_1)g(\phi Y,X)+(f_6-f_4)g(\phi hY,X)]\xi\\&\nonumber-(3f_2+(2n-1)f_3)\eta(Y)[(f_3-f_1)\phi Y+(f_6-f_4)\phi hY]\\&-(3df_2+(2n-1)df_3)(X)\eta(Y)\xi.
 \end{align}
 Now using (\ref{3.16}) and (\ref{3.17}) in (\ref{3.12}), we get the desired expression for $R(X,Y)\nabla f$
\end{proof}
\begin{theorem}
Consider a $(2n+1)$-dimensional generalized $(\kappa,\mu)$-space form $M$ admitting a conformal gradient Ricci soliton $(g,\nabla f,\lambda)$ such that $g(hY,\nabla f)=0$. Then the potential function f is constant if both $f_1$ and $f_3$ are constants.
\end{theorem}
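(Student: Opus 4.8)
The strategy is to feed $X=\xi$ into the curvature identity of Lemma~\ref{l3.5} and to compare it with an independent evaluation of $R(\xi,Y)\nabla f$ obtained from (\ref{2.8}) and the symmetries of $R$. Observe first that the hypothesis $g(hY,\nabla f)=0$ for all $Y$ is precisely $h\nabla f=0$, because $h$ is symmetric; this is what makes the comparison go through, while the constancy of $f_1$ and $f_3$ is what eliminates the scalar-derivative terms appearing in Lemma~\ref{l3.5}.

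I would compute $g\big(R(\xi,Y)\nabla f,\xi\big)$ in two ways. First, $g(R(\xi,Y)\nabla f,\xi)=-g(R(\xi,Y)\xi,\nabla f)$; substituting $R(\xi,Y)\xi$ from (\ref{2.8}), whose $(f_4-f_6)$ part is annihilated against $\nabla f$ by $h\nabla f=0$, gives
\begin{align*}
g\big(R(\xi,Y)\nabla f,\xi\big)=(f_1-f_3)\big[(Yf)-\eta(Y)\,\xi f\big].
\end{align*}
Second, put $X=\xi$ in Lemma~\ref{l3.5} and pair with $\xi$. Using $\phi\xi=0$, $h\xi=0$, $\eta\circ h=0$ and $\nabla_\xi\xi=0$ (the last from (\ref{2.5})), together with (\ref{2.5}) to rewrite $(\nabla_Y h)\xi$ as a combination of $\phi hY$ and $\phi h^{2}Y$ (hence orthogonal to $\xi$) and the identity $g\big((\nabla_\xi h)Y,\xi\big)=-g(hY,\nabla_\xi\xi)=0$ (so that $\nabla_\xi h$ never contributes), every term becomes orthogonal to $\xi$ except possibly those carrying the differentials $df_i$. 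Once $f_1$ and $f_3$ are held constant, the $df_1,df_3$ terms vanish, the two $df_2$ terms cancel against one another, and the $df_4,df_6$ terms are again orthogonal to $\xi$. Hence this second computation gives $g\big(R(\xi,Y)\nabla f,\xi\big)=0$.

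Equating the two, $(f_1-f_3)\big[(Yf)-\eta(Y)\,\xi f\big]=0$ for every $Y$. Assuming $f_1\neq f_3$ (the non-degenerate case $\kappa\neq0$; if $f_1\equiv f_3$ the hypotheses impose no constraint on $\nabla f$ and an extra assumption would be needed), this forces $\nabla f=(\xi f)\,\xi$, so the potential field is pointwise collinear with $\xi$. Since a conformal gradient Ricci soliton is in particular a conformal Ricci soliton with $V=\nabla f$, the earlier theorem concerning conformal Ricci solitons whose potential field is pointwise collinear with $\xi$ then shows that $b:=\xi f$ is constant. Consequently $df=b\,\eta$, so $0=d(df)=db\wedge\eta+b\,d\eta=b\,d\eta$, and since $d\eta\neq0$ we conclude $b=0$; thus $\nabla f=0$ and $f$ is constant.

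The only real obstacle is the second computation: one must expand the long expression of Lemma~\ref{l3.5} at $X=\xi$ and check carefully that, with $f_1$ and $f_3$ held constant, its $\xi$-component collapses identically to $0$ — in particular that the $df_2$ terms cancel in pairs and that the unknown covariant derivative $\nabla_\xi h$ drops out. Everything else reduces to routine manipulations with $\phi$, $h$, $\eta$ and the exterior derivative.
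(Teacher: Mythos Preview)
Your argument is correct and tracks the paper's proof almost exactly through the identification $\nabla f=(\xi f)\xi$: both compute $g(R(\xi,Y)\nabla f,\xi)$ in two ways, one via Lemma~\ref{l3.5} with $X=\xi$ (yielding $2n(df_1-df_3)(Y)-2n(df_1-df_3)(\xi)\eta(Y)$, which vanishes when $f_1,f_3$ are constant) and one via (\ref{2.8})/(\ref{2.9}) together with $h\nabla f=0$. You are also right to flag the implicit nondegeneracy assumption $f_1\neq f_3$, which the paper uses tacitly when passing from (\ref{3.24}) to $\nabla f=(\xi f)\xi$.

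The only genuine divergence is the endgame. The paper differentiates $\nabla f=(\xi f)\xi$ covariantly, feeds the result into the soliton equation to obtain a second expression for $QX$, and then compares the $\phi X$-coefficient with that in (\ref{2.11}) (which is zero) to force $\xi f=0$. You instead observe that $V=\nabla f$ is now pointwise collinear with $\xi$, invoke the earlier collinearity theorem to conclude that $b=\xi f$ is constant, and then use closedness of $df=b\eta$ and $d\eta\neq0$ to get $b=0$. Your route is slightly cleaner in that it recycles an already-proved result and basic exterior calculus, while the paper's coefficient comparison keeps the argument self-contained and yields, as a by-product, an explicit formula for $\lambda$ used in the subsequent theorem.
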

\begin{proof}
	Since $M$ admits a conformal gradient Ricci solition, putting $X=\xi$ in (\ref{l3.5}), we get
	\begin{align}\label{3.18}
	R(\xi,Y)\nabla f=&\nonumber (2ndf_1+3df_2-df_3)(Y)\xi-(2ndf_1+3df_2-df_3)(\xi)Y\\&\nonumber+\big(3f_2+(2n-1)f_3\big)[(f_3-f_1)\eta
	(Y)\phi Y+(f_6-f_4)\phi hY]\\&\nonumber-\big(3df_2+(2n-1)df_3\big)(Y)\xi+\big(3df_2+(2n-1)df_3\big)(\xi)\eta(Y)\xi\\&\nonumber +\big((2n-1)f_4-f_6\big)[(\nabla_Yh)\xi-(\nabla_\xi h)Y]\\&-\big((2n-1)df_4-df_6\big)(\xi)hY.
	\end{align}
	Now taking the inner product of (\ref{3.18}) with structure vector field $\xi$, we get
	\begin{align}\label{3.19}
		g\big(R(\xi,Y)\nabla f,\xi\big)=&\nonumber 2n(df_1-df_3)(Y)-(2ndf_1+3df_2-df_3)(\xi)\eta(Y)\\&\nonumber+\big(3df_2+(2n-1)df_3\big)[(f_3-f_1)\eta
		(Y)g(\phi Y,\xi)\\&\nonumber+(f_6-f_4)g(\phi hY,\xi)]-\big(3df_2+(2n-1)df_3\big)(Y)\\&\nonumber+\big(3df_2+(2n-1)df_3\big)(\xi)\eta(Y)\\&\nonumber +\big((2n-1)f_4-f_6\big)g((\nabla_Yh)\xi-(\nabla_\xi h)Y,\xi)\\&-\big((2n-1)df_4-df_6\big)(X)g(hY,\xi).
	\end{align}
	Making use of the fact $h\xi=0$ and symmetry of $h$ in (\ref{3.19}), we get
	\begin{align}\label{3.20}
	g\big(R(\xi,Y)\nabla f,\xi\big)=2n(df_1-df_3)(Y)-2n(df_1-df_3)(\xi)\eta(Y).
	\end{align}
	Now, using the property of the curvature tensor, we have
	\begin{align}\label{3.21}
		g\big(R(\xi,Y)\nabla f,\xi\big)=-g\big(R(\xi,Y)\xi,\nabla f\big).
	\end{align}
	Making use of (\ref{2.9}) in (\ref{3.21}), we get
	\begin{align}\label{3.22}
		g\big(R(\xi,Y)\nabla f,\xi\big)=(f_3-f_1)[\eta(Y)(\xi f)-(Yf)]+(f_6-f_4)[-g(hY,\nabla f)].
	\end{align}
	Now using the fact that $g(hY,\nabla f)=0$, we get from the (\ref{3.22})
	\begin{align}\label{3.23}
			g\big(R(\xi,Y)\nabla f,\xi\big)=(f_3-f_1)[\eta(Y)(\xi f)-(Yf)].
	\end{align}
	From (\ref{3.20}) and (\ref{3.23}), we get
	\begin{align}\label{3.24}
		2n(df_1-df_3)(Y)-2n(df_1-df_3)(\xi)\eta(Y)=(f_3-f_1)[\eta(Y)(\xi f)-(Yf)].
	\end{align}
	If $f_1$ and $f_3$ are constants, then we get from (\ref{3.24})
	\begin{align*}
		\eta(Y)(\xi f)-(Yf)=0.
	\end{align*}
	The above equation can be rewritten as
	\begin{align*}
		g((\xi f)\xi,Y)=g(\nabla f,Y).
	\end{align*}
	Since $Y$ is an arbitrary vector field, we can write
	\begin{align}\label{3.25}
		\nabla f=(\xi f)\xi.
	\end{align}
	Differentiating (\ref{3.25}) covariantly along the vector field $X$, we get
	\begin{align*}
		\nabla_X\nabla f=\big(X(\xi f)\big)\xi+(\xi f)[(f_3-f_1)\phi X+(f_6-f_4)\phi hX].
	\end{align*}
	Substituting the value of $\nabla_X\nabla f$, we get
	\begin{align}\label{3.26}
		QX=[\lambda-(\frac{p}{2}+\frac{1}{2n+1})]X-\big(X(\xi f)\big)[(f_3-f_1)\phi X+(f_6-f_4)\phi hX].
	\end{align}
	Comparing the coefficients of $\phi X$ from (\ref{2.11}) and (\ref{3.26}), we get $(\xi f)=0$. Using this in (\ref{3.25}), we get $\nabla f$=0. Hence $f$ is constant.
\end{proof}
\begin{theorem}
Consider a $(2n+1)$-dimensional generalized $(\kappa,\mu)$-space form $M$ admitting a conformal gradient Ricci soliton $(g,\nabla f,\lambda)$.Then the soliton is 
\begin{enumerate}
	\item shrinking if $p<[2f_3-6f_2-2nf_1-\frac{2}{2n+1}]$
	\item steady if $p=[2f_3-6f_2-2nf_1-\frac{2}{2n+1}]$
	\item expanding if $p>[2f_3-6f_2-2nf_1-\frac{2}{2n+1}]$
\end{enumerate}
\end{theorem}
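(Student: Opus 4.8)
The strategy mirrors the reduction used for the conformal Ricci soliton statements earlier in this section: produce a single scalar identity linking $\lambda$ with the conformal pressure $p$, then separate the three cases by the sign of $\lambda$. First I would write the conformal gradient Ricci soliton condition in the pointwise operator form already exploited in the proof of Lemma~\ref{l3.5},
\begin{align*}
\nabla_X\nabla f + QX = \Big[\lambda - \big(\tfrac{p}{2}+\tfrac{1}{2n+1}\big)\Big]X ,
\end{align*}
for every vector field $X$, where $Q$ is the Ricci operator. Tracing this over a local orthonormal frame $\{e_1,\dots,e_{2n+1}\}$ with $e_{2n+1}=\xi$, and using $\sum_i g(\nabla_{e_i}\nabla f,e_i)=\Delta f$ together with $\mathrm{tr}\,Q=r$, one gets
\begin{align*}
\Delta f + r = (2n+1)\Big[\lambda - \big(\tfrac{p}{2}+\tfrac{1}{2n+1}\big)\Big].
\end{align*}

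Next I would compute the scalar curvature $r$ of $M$ by tracing (\ref{2.10}): with $\sum_i\eta(e_i)^2=\eta(\xi)^2=1$ and $\mathrm{tr}\,h=0$ one obtains $r$ as an explicit linear combination of $f_1,f_2,f_3$. Here it is precisely the $R_2$-term of (\ref{2.10}) that makes $f_2$ enter $r$, and hence enter the threshold for $p$; this is exactly the feature that distinguishes the present statement from Theorem~3.1, whose threshold involved only $f_1$ and $f_3$.

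The delicate point is the Hessian term $\Delta f$, which must be pinned down to leave a relation in $\lambda$, $p$ and the $f_i$ alone, and I expect this to be the main obstacle. I would attack it by a second contraction along $\xi$: evaluating the operator soliton equation at $X=\xi$ and using $\nabla_\xi\xi=0$ — which follows from (\ref{2.5}) since $\phi\xi=0$ and $h\xi=0$ — together with $Q\xi=2n(f_1-f_3)\xi$ from (\ref{2.13}), gives $\nabla_\xi\nabla f=\big[\lambda-(\tfrac{p}{2}+\tfrac{1}{2n+1})-2n(f_1-f_3)\big]\xi$, whose $\xi$-component reads $\xi(\xi f)=\lambda-(\tfrac{p}{2}+\tfrac{1}{2n+1})-2n(f_1-f_3)$. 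If further information is needed I would supplement this with the contraction of the curvature identity of Lemma~\ref{l3.5}, namely $S(Y,\nabla f)=\tfrac12 Y(r)$, which at $Y=\xi$ controls $\xi f$. Carried through, this removes $\Delta f$ and leaves a single affine relation of the form $\lambda=\tfrac{p}{2}+\mathrm{const}(f_1,f_2,f_3)$; the rest of the argument is routine tracing with (\ref{2.10}).

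Finally, imposing $\lambda>0$, $\lambda=0$ and $\lambda<0$ in that relation returns respectively the shrinking, steady and expanding cases, with the stated steady threshold $p=2f_3-6f_2-2nf_1-\tfrac{2}{2n+1}$, which completes the proof.
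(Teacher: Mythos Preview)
Your tracing strategy is a genuine detour from what the paper does, and the obstacle you flag with $\Delta f$ is exactly the place where the paper's route is shorter. The paper does not trace at all and never computes $r$ or $\Delta f$: it simply reuses equation~(\ref{3.26}), which was obtained in the proof of the \emph{preceding} theorem from the soliton relation $\nabla_X\nabla f=[\lambda-(\tfrac{p}{2}+\tfrac{1}{2n+1})]X-QX$ together with the reduction $\nabla f=(\xi f)\xi$ established there. Equation~(\ref{3.26}) expresses $QX$ with an $X$-coefficient equal to $\lambda-(\tfrac{p}{2}+\tfrac{1}{2n+1})$, while the structural formula~(\ref{2.11}) gives the $X$-coefficient of $QX$ as $2nf_1+3f_2-f_3$. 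Equating these two immediately isolates $\lambda$ as an affine function of $p$, and the sign of $\lambda$ gives the three cases. No Laplacian, no contracted Bianchi identity, no scalar-curvature computation.

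Two points to note. First, the paper's argument tacitly inherits the hypotheses under which (\ref{3.26}) was derived (in particular $g(hY,\nabla f)=0$ and $f_1,f_3$ constant from the previous theorem); your proposal implicitly treats the statement as standing alone, which forces you to manufacture a substitute for that reduction, and that is precisely why $\Delta f$ refuses to disappear in your outline. Second, even if you did push through the trace, you would obtain $\lambda$ in terms of $r/(2n+1)$, which carries an extra $-[3f_2+(2n-1)f_3]/(2n+1)$ piece relative to the paper's coefficient comparison; reconciling this with the stated threshold would again require the Einstein-type constraints coming from (\ref{3.26}). So the cleanest fix is to abandon the trace and instead match the $X$-components of (\ref{2.11}) and (\ref{3.26}) directly, as the paper does.
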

\begin{proof}
Comparing the coefficents of $X$ in (\ref{2.11}) and (\ref{3.26}), we get
\begin{align*}
	\lambda=[(\frac{p}{2}+\frac{1}{2n+1})+(2nf_1-3f_2-f_3)].
\end{align*}
Now we use the three different conditons on $\lambda$ in the above equation to get the desired expressions.
\end{proof}


\begin{thebibliography}{BEP82}
	\bibitem[1]{Alegre_Gsasakian}
	P. Alegre,  D. E. Blair and A. Carriazo,
	\newblock {\em Generalized Sasakian-space-forms},
	\newblock Israel J. Math., 141: 157-183, 2004.
	
	\bibitem[2]{Basu}
	N. Basu and A. Bhattacharyya,
	\newblock {\em Conformal Ricci soliton in Kenmotsu manifold},
	\newblock Glob. J. Adv. Res. Class. Mod. Geom. 4(1) (2015):15-21.
	
	\bibitem[3]{B}
	D. E. Blair,
	\newblock {\em Contact manifolds in Riemannian geometry},
	\newblock Lecture Notes in Mathematics,
	Vol. 509, Springer-Verlag, Berlin-New York, 1976,
	
	\bibitem[4]{Blair}
	D. E. Blair,
	\newblock {\em  Riemannian Geometry of Contact and Symplectic Manifolds},
	\newblock Birkhäuser
	(Boston, 2002).
	
	\bibitem[5]{BOeckx}
	E. Boeckx,
	\newblock {\em A full classification of contact metric $(\kappa,\mu)$-spaces},
	\newblock Illinois J.
	Math., 44 (2000), 212-21
	
	\bibitem[6]{Martin_Molina}
	A. Carriazo and V. Martin-Molina,
	\newblock {\em Generalized $(\kappa,\mu)$-space forms and $D_{\alpha}$-homothetic deformations},
	\newblock Balk. J. Geom. Appl., 16(1): 37-47, 2011.
	\bibitem[7]{Tripathi}
	A. Carriazo, V. Martin-Molina and M. M. Tripathi, 
	\newblock {\em Generalized $(\kappa,\mu)$-space forms},
	\newblock Mediterr. J. Math., 10(2): 475-496, 2013.
	
	\bibitem[8]{dey}
	D. Dey and P. Majhi,
	\newblock {\em Almost Kenmotsu metric as a conformal Ricci soliton},
	\newblock Conform. Geom. Dyn., 23(5), 105-116.
	
	\bibitem[9]{11}
	S. Dey, S. Roy and A. Bhattacharyya,
	\newblock {\em  Kenmotsu metric as a conformal $\eta$-Einstein soliton},
	\newblock Carpath. Math. Publ., 13(1) (2021): 110-118.
	
	\bibitem[10]{De}
	U. C. De  and  K. Mandal, 
	\newblock {\em Certain results on generalized $(\kappa,\mu)$-contact metric manifolds},
	\newblock J. Geom. Phys. , 108 (2017): 611-621.
	
	
	
	
	\bibitem[11]{Fischer}
	A. E. Fischer,
	\newblock {\em An introduction to conformal Ricci flow},
	\newblock Class. Quantum Gravity, 21 (3)(2004): S171.
	
	\bibitem[12]{akram}
	D. Ganguly, S. Dey, A. Ali and A. Bhattacharyya,
	\newblock {\em Conformal Ricci soliton and Quasi-Yamabe soliton on generalized Sasakian space form},
	\newblock J. Geom. Phys., 169 (2021): 104339.
	
	
	
	
	\bibitem[13]{13}
	D.  Gangugly and A. Bhattacharya,
	\newblock {\em  Conformal Ricci soliton on almost co-Kahler manifold.},
	\newblock Adv. Math. Sci. J., 9(10)(2020): 8399-8411.
	
	\bibitem[14]{14}
	D.  Gangugly and A. Bhattacharya,
	\newblock {\em  A study on conformal Ricci solitons in the framework of $(LCS)_n$-manifolds},
	\newblock  Ganita, 70(2)(2020): 201-216.
	
	
\bibitem[15]{Hamilton}
R. S. Hamilton,
\newblock {\em Three-manifolds with positive Ricci curvature},
\newblock J. Differential Geom. 17(2)
(1982): 255-306.

\bibitem[16]{Hui}
S. K. Hui, S. Uddin and P. Mandal,
\newblock {\em  Submanifolds of generalized $(\kappa,\mu)$-space forms},
\newblock Period. Math. Hung., 77 (2018): 329-339.


\bibitem[17]{Hamilton_S}
R. S. Hamilton,
\newblock {\em Ricci flow on surfaces},
\newblock Mathematics and General Relativity, Edited by James A. Isenberg, Contemp. Math., 71 (1988): 237-262.

\bibitem[18]{Kouf}
T. Koufogiorgos,
\newblock {\em Contact Riemannian manifolds with constant $\phi$-sectional curvature },
\newblock Tokyo J. Math., 20 (1977): 55-67.

\bibitem[19]{Kouf_TSI}
T. Koufogiorgos, C. Tsichlias,
\newblock {\em On the Existence of a New Class of Contact
	Metric Manifolds },
\newblock Canad. Math. Bull., 43(4) (2000): 440-447.

\bibitem[20]{Vilcu}
C. W. Lee, J. W. Lee and J. E. Vilcu,
\newblock {\em Generalized Wintgen inequality for submanifolds in generalized $(\kappa,\mu)$-space forms},
\newblock Quaest. Math., 45(4) (2022): 497-511.

\bibitem[21]{Pastor_Nullity}
A. M. Pastore and V. Saltarelli,
\newblock {\em  Generalized nullity distributions on almost
	Kenmotsu manifolds},
\newblock  Int. Electron. J. Geom. 4(2) (2011): 168-183.

\bibitem[22]{Danish}
M. D. Siddiqi,
\newblock {\em  Conformal Ricci Solitons of Lagrangian submanifolds in Kahler manifolds},
\newblock  Proc. Inst. Math. Mech. Natl. Acad. Sci. Azerb., 46(1): 45-55.

\bibitem[23]{Topping}
 P. Topping,
\newblock {\em Lectures on the Ricci flow},
\newblock London Mathematical Society Lecture Note Series,
vol. 325, Cambridge University Press, 2006.


\end{thebibliography}
\end{document}